\newtheorem{theorem}{Theorem}[section]
\newtheorem{corollary}[theorem]{Corollary}
\newtheorem{lemma}[theorem]{Lemma}
\theoremstyle{definition}
\theoremstyle{definition}
\DeclareMathOperator{\C}{\mathbb{C}}
\DeclareMathOperator{\R}{\mathbb{R}}
\DeclareMathOperator{\N}{\mathbb{N}}
\newcommand{\sph}{\mathbb{S}^{{\it m}-1}}
\newcommand{\defeq}{\vcentcolon=}
\newcommand{\eqdef}{=\vcentcolon}
\date{\today}
\title[Lieb-Thirring inequalities on the spheres and $SO(3)$]{Lieb-Thirring inequalities on the spheres and $SO(3)$}
\author{André Kowacs}
\address[André Kowacs]{Department of Mathematics,
 Universidade Federal do Paraná (UFPR),\endgraf 
 Curitiba, Paraná, Brazil \endgraf
 and\endgraf
 Department of Mathematics: Analysis, Logic and Discrete Mathematics,
  \endgraf
  Ghent University, Belgium}
\email{andrekowacs@gmail.com}
\author{Michael Ruzhansky}
\address[Michael Ruzhansky]{
  Department of Mathematics: Analysis, Logic and Discrete Mathematics,
  \endgraf
  Ghent University, Belgium \endgraf
  and\endgraf
  Queen Mary University of London, 
   United Kingdom}
\email{Michael.Ruzhansky@ugent.be}
\thanks{The authors are supported  by the FWO  Odysseus  1  grant  G.0H94.18N:  Analysis  and  Partial Differential Equations, by the Methusalem programme of the Ghent University Special Research Fund (BOF)
(Grant number 01M01021) and by the FWO grant G011522N. Michael Ruzhansky was also supported by EPSRC grant EP/V005529/1. Andr\'e Kowacs was supported in part by the Coordenação de Aperfeiçoamento de Pessoal de N\'ivel Superior - Brasil (CAPES) - Finance Code 001”;}
\subjclass{Primary: 26D10. Secondary: 22E30}
\keywords{Lieb-Thirring inequalities, Spectral inequalities, Compact Lie groups, Spheres}
\begin{document}
 
\begin{abstract}
In this paper, we obtain new upper bounds for the Lieb-Thirring inequality on the spheres of any dimension greater than $2$. As far as we have checked, our results improve previous results found in the literature for all dimensions greater than $2$. We also prove and exhibit an explicit new upper bound for the Lieb-Thirring inequality on $SO(3)$. We also discuss these estimates in the case of general compact Lie groups. Originally developed for estimating the sums of moments of negative eigenvalues of the Schrödinger operator in $L^2(\mathbb{R}^n)$, these inequalities have applications in quantum mechanics and other fields. 
\end{abstract}

\maketitle
\section{Introduction}
The Lieb-Thirring inequalities, originally presented in \cite{LiebThirring}, provide estimates for the $\gamma$-moments of the negative eigenvalues of the Schrödinger operator
\begin{equation}\label{deltav}
\Psi = -\Delta - V
\end{equation}
in the Hilbert space $L^2(\R^n)$. These inequalities assert that there exist constants $L_{\gamma,n}>0$ such that
\begin{equation}\label{Lieb1}
\sum_{\lambda_j \leq 0} |\lambda_j|^\gamma \leq L_{\gamma,n} \int_{\mathbb{R}^n} V(x)^{\gamma + \frac{n}{2}} dx,
\end{equation}
where $V\geq 0$ is a real-valued potential which decays fast at infinity, 
and $\gamma\geq \max\{1-\frac{n}{2},0\}$. In \cite{sharpvalue} sharp values of the constants $L_{\gamma,n}$, for $\gamma>\frac{3}{2}$ and every $n\in\N$, were obtained. These are given by 
\begin{equation}\label{sharprn}
    L^{cl}_{\gamma,n} = \frac{\Gamma(\gamma+1)}{2^n\pi^{\frac{n}{2}}\Gamma(\gamma+n/2+1)}.
\end{equation}
This type of inequality is equivalent to the following type of inequality for orthonormal families of functions. There exists a constant $k_n>0$ such that, for any orthonormal family of functions $\{\psi_j\}_{j=1}^N\subset H^1(\R^n)$, and for $\rho(x)\defeq\sum_{j=1}^N|\psi_j(x)|^2$, one has
\begin{equation}\label{LiebGenereal}
    \int_{\mathbb{R}^n} \rho(x)^{\frac{n+2}{n}}dx\leq k_n\sum_{j=1}^N\|\nabla\psi_j\|^2_2.
\end{equation}
Here the best constants $k_n$ are related to $L_{\gamma,n}$ by (see \cite{application1,LiebThirring}):
\begin{equation*}
    k_n=\frac{2}{n}\left(1+\frac{n}{2}\right)^{\frac{n+2}{n}}L^{\frac{2}{n}}_{1,n}.
\end{equation*}
Naturally, these inequalities can also be studied on manifolds. For a smooth manifold $M$ of dimension $n$, let $k_{M}>0$ denote the smallest constant such that we have
\begin{equation}\label{LiebGenereal2}
    \int_{M} \rho(x)^{\frac{n+2}{n}}dx\leq k_M\sum_{j=1}^N\|\nabla\psi_j\|_{2}^2,
\end{equation}
for every family of functions $\{\psi_j\}_{j=1}^N\subset H^1(M)$, where $\rho(x)\defeq\sum_{j=1}^N|\psi_j(x)|^2$.

On a compact manifold $M$ however, the eigenvalue $0$ needs to be accounted for. This means that instead of considering the operator \eqref{deltav}, one must consider 
\begin{equation*}
    -\Delta -\Pi(V\Pi\cdot),
\end{equation*}
where $\Pi$ denotes the orthogonal projection
\begin{equation*}
    \Pi \psi(x)=\psi(x)-\frac{1}{|M|}\int_M \psi(y)dy.
\end{equation*} 
For more references in this area, we cite \cite{survey,nsphere,liebsome,hyperbolic,torus,LiebonS2, onaclass}. It is also worth mentioning that
inequality \eqref{LiebGenereal2} has diverse applications in different fields, such as in dynamical systems (\cite{application3,application2,application1}) and quantum mechanics.
 Our work was primarily inspired by \cite{torus,LiebonS2}, where the authors obtained the upper bounds $k_{\mathbb{S}^2}\leq \frac{3}{2\pi}$, 
 and $k_{\mathbb{T}^2}\leq \frac{6}{\pi^2}$. The method they used contains ideas similar to the ones first introduced by M. Rumin \cite{Rumin}, where the inequality was considered in its first formulation \eqref{Lieb1}, in a more general setting and applied to $\R^n$. The method was later developed for the Euclidean setting in \cite{frank} and \cite{Frank2}. They also considered the first formulation of the inequality, though the second formulation \eqref{LiebGenereal} is also addressed in chapters $7.4.1$ and $7.4.2$ of \cite{Frank2}. Building on their ideas, we also obtain a new upper bound for the best constant $k_{\mathbb{S}^{m-1}}$ on the $m-1$ dimensional sphere $\mathbb{S}^{m-1}$ and for $k_{SO(3)}$ on the Lie group $SO(3)$. For every $m\geq 4$ that we have checked, our results improve the bounds obtained previously by Ilyin in \cite{nsphere} and Pan in \cite{lieb4best}, see Table \ref{table1}. However, it is worth mentioning that for the the case $m=3$ corresponding to $\mathbb{S}^2$, Ilyin and Laptev give a better upper bound in \cite{liebS2best}, as one can use a different argument, only suited for $\mathbb{S}^2$. Thus, the main interest in the following theorem is the case when $m\geq 3$, yielding new constants.
 We summarize our results as follows.

\begin{theorem}\label{teomain} Let $m\geq2$.
    Let $\{\psi_j\}_{j=1}^N\subset H^1(\sph)$ be an orthonormal family such that $\int_{\sph}\psi_j(x)dx=0$, for all $j$. Let
    \begin{equation*}
        \rho(x)\defeq\sum_{j=1}^N|\psi_j(x)|^2,
    \end{equation*}
    for $x\in \sph$. Then 
    \begin{equation*}
        \int_{\sph}\rho(x)^{\frac{m+1}{m-1}}dx\leq \frac{(m+3)}{(\sigma_m(m-1)!)^{\frac{2}{m-1}}}\left(\frac{m+1}{m-1}\right)^{\frac{m+1}{m-1}}\sum_{j=1}^N\|\nabla \psi_j\|^2_2,
    \end{equation*}
     where $\sigma_m$ denotes the area of the $m-1$ sphere, i.e.: $\sigma_m=2\pi^{\frac{m}{2}}/\Gamma(\frac{m}{2})$.
\end{theorem}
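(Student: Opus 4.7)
The plan is to adapt the spectral high-low frequency splitting of Ilyin--Laptev to the higher-dimensional sphere $\sph$. Since each $\psi_j$ has zero mean, its spherical-harmonic expansion $\psi_j = \sum_{\ell\geq 1}\sum_{k=1}^{d_\ell}\hat\psi_j(\ell,k)Y_{\ell,k}$ starts at $\ell=1$, where $\{Y_{\ell,k}\}_{k=1}^{d_\ell}$ is an $L^2$-orthonormal basis of the eigenspace of $-\Delta$ corresponding to the eigenvalue $\lambda_\ell = \ell(\ell+m-2)$, of dimension $d_\ell$. Fix a cut-off $L > 0$ and split $\psi_j = P_L\psi_j + Q_L\psi_j$, where $P_L$ is the projection onto harmonics of degree $1 \leq \ell \leq L$. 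Bessel's inequality applied to the reproducing kernel of $P_L$, combined with the addition formula $\sum_k|Y_{\ell,k}(x)|^2 = d_\ell/\sigma_m$, yields the uniform pointwise bound
\[
\sum_{j=1}^N|P_L\psi_j(x)|^2 \;\leq\; \frac{1}{\sigma_m}\sum_{\ell=1}^L d_\ell,
\]
while from the spectral gap above $\lambda_{L+1}$ one has the $L^2$-estimate
\[
\sum_{j=1}^N\|Q_L\psi_j\|_2^2 \;\leq\; \frac{1}{\lambda_{L+1}}\sum_{j=1}^N\|\nabla\psi_j\|_2^2.
\]

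The next step is to combine these two ingredients into an $L^{(m+1)/(m-1)}$ bound on $\rho$. From the pointwise Cauchy--Schwarz splitting
\[
\rho(x)^{1/2} \;\leq\; \Big(\sum_j |P_L\psi_j(x)|^2\Big)^{1/2} + \Big(\sum_j|Q_L\psi_j(x)|^2\Big)^{1/2},
\]
the low piece is controlled in $L^\infty$ by the first bound while the high piece retains its $L^2$-control. Writing $\int \rho^{(m+1)/(m-1)} = \int \rho\cdot \rho^{2/(m-1)}$ and combining these ingredients through Hölder's inequality produces an estimate of the form $\int\rho^{(m+1)/(m-1)} \leq F(L)\sum_j\|\nabla\psi_j\|_2^2$, where $F(L)$ depends on the closed form $\sum_{\ell=1}^L d_\ell = \frac{(2L+m-1)(L+m-2)!}{(m-1)!\,L!}-1$ and on $\lambda_{L+1} = (L+1)(L+m-1)$. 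Optimizing $F(L)$ over $L\geq 1$, treated as a continuous parameter by monotonicity of the relevant expressions, yields the constant stated in the theorem.

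The main technical obstacle will be calibrating the Hölder step so as to produce exactly the factor $(m+3)$ and the clean denominator $(\sigma_m(m-1)!)^{2/(m-1)}$: a crude pointwise bound $\rho \leq 2\sum|P_L\psi_j|^2 + 2\sum|Q_L\psi_j|^2$ loses constants, whereas the square-root splitting above preserves the sharp dependence. The combinatorial sum $\sum_\ell d_\ell$ is the source of the $(m-1)!\,\sigma_m$ factor, while the continuous optimization in $L$ is responsible for $(m+3)$ and the exponent $(m+1)/(m-1)$. An equivalent dual route, useful as a cross-check, is to establish first the one-particle Lieb--Thirring bound $\sum_k(\lambda_k(-\Delta-\Pi V\Pi))_-\leq L\int_{\sph}V^{(m+1)/2}\,dx$ with an explicit constant, and then convert it into the orthonormal family statement via the standard Legendre-transform argument of setting $V = c\rho^{2/(m-1)}$ and optimizing $c>0$.
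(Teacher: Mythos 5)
Your ingredients are all correct --- the addition formula $\sum_k|Y_{\ell,k}(x)|^2=d_\ell/\sigma_m$, the Bessel bound $\sum_j|P_L\psi_j(x)|^2\le\sigma_m^{-1}\sum_{\ell\le L}d_\ell$, the closed form for $\sum_{\ell\le L}d_\ell$, and the spectral-gap control of the high part --- and they coincide with what the paper uses. The gap is in the assembly. A \emph{single} global cutoff $L$, followed by H\"older and an optimization of $F(L)$ at the end, cannot be ``calibrated'' to produce the stated constant: the optimal cutoff depends on the local size of $\rho(x)$, which varies over the sphere, so any fixed $L$ (or any splitting of $\sph$ into level sets of $\rho$) loses a factor. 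This is essentially the method of Ilyin's 1993 paper, and it yields the weaker constants in Table \ref{table1} (about $1.6$--$2.0$ for $k_{\mathbb{S}^{m-1}}$) rather than the theorem's $0.5$--$0.95$. Concretely, if you try to pass from the family of bounds $\int_{\sph}(\rho^{1/2}-A(L)^{1/2})_+^2\,dx\le\lambda_{L+1}^{-1}\sum_j\|\nabla\psi_j\|_2^2$ to $\int\rho^{(m+1)/(m-1)}$ by a layer-cake decomposition in $L$, the resulting integral sits exactly at the critical exponent and the constant degrades; there is no choice of H\"older exponents that recovers $(m+3)\left(\tfrac{m+1}{m-1}\right)^{(m+1)/(m-1)}(\sigma_m(m-1)!)^{-2/(m-1)}$.

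The missing idea is to run the high--low splitting at \emph{every} energy simultaneously, via the operator identity
\begin{equation*}
\dot{\mathcal{L}}=\int_0^\infty P_E^\perp\,dE,
\qquad\text{hence}\qquad
\sum_{j=1}^N\|\nabla\psi_j\|_2^2=\int_{\sph}\int_0^\infty\sum_{j=1}^N|P_E^\perp\psi_j(x)|^2\,dE\,dx .
\end{equation*}
Your pointwise triangle inequality then gives, for each fixed $E$ and almost every $x$,
\begin{equation*}
\sum_{j=1}^N|P_E^\perp\psi_j(x)|^2\ \ge\ \bigl(\rho(x)^{1/2}-K_m^{1/2}C(E)^{1/2}\bigr)_+^2,
\end{equation*}
where $C(E)$ is (up to the constant $K_m$) the eigenvalue counting function you already computed. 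Integrating this lower bound in $E$ for each fixed $x$ performs the cutoff optimization pointwise and produces exactly $\frac{(m-1)^2}{(m+1)(m+3)}K_m^{-2/(m-1)}\rho(x)^{(m+1)/(m-1)}$ after an explicit one-variable calculus estimate; integrating in $x$ finishes the proof. This integrated spectral representation (the Ilyin--Laptev torus argument, which the paper adapts) is what generates the factor $(m+3)$ and the exponent $\frac{m+1}{m-1}$; without it your outline reproves a weaker inequality than the one stated. Your proposed dual route through the one-particle bound and a Legendre transform is a legitimate equivalent formulation, but it merely relocates the same difficulty into proving the one-particle bound with the sharp constant.
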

Since $SU(2)\cong \mathbb{S}^3$, we obtain as a direct consequence of Theorem \ref{teomain}, with $m=4$:
\begin{corollary}\label{corosu2}
    Let $\{\psi_j\}_{j=1}^N\subset H^1(SU(2))$ be an orthonormal family such that $\int_{SU(2)}\psi_j(x)dx=0$, for all $j$. Let
    \begin{equation*}
        \rho(x)\defeq\sum_{j=1}^N|\psi_j(x)|^2,
    \end{equation*}
    for $x\in SU(2)$. Then 
    \begin{equation*}
        \int_{SU(2)}\rho(x)^{\frac{5}{3}}dx\leq  \frac{35}{18}\sqrt[3]{\frac{25}{6\pi^{4}}}\sum_{j=1}^N\|\nabla \psi_j\|^2_2,
    \end{equation*}
    where $ \frac{35}{18}\sqrt[3]{\frac{25}{6\pi^{4}}}\approx0.680026$.
\end{corollary}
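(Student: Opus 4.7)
The plan is to obtain Corollary \ref{corosu2} as a direct specialization of Theorem \ref{teomain}, since $SU(2)$ is diffeomorphic to $\mathbb{S}^3$ via the standard identification
\begin{equation*}
    SU(2) \ni \begin{pmatrix} a & -\bar b \\ b & \bar a \end{pmatrix} \longleftrightarrow (a,b) \in \mathbb{S}^{3}\subset \mathbb{C}^2,
\end{equation*}
which is in fact an isometry when $SU(2)$ is equipped with its bi-invariant metric (appropriately normalized) and $\mathbb{S}^3$ carries the round metric of radius one. Under this identification, the Haar measure on $SU(2)$ corresponds to the standard surface measure on $\mathbb{S}^3$, $H^1$-Sobolev spaces are identified, gradients have matching norms, and the zero-mean condition is preserved. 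Hence any orthonormal zero-mean family on $SU(2)$ transports to an orthonormal zero-mean family on $\mathbb{S}^3$ with identical $\rho$, $L^p$ integrals, and gradient norms.

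Next I would apply Theorem \ref{teomain} with $m=4$. This requires computing each ingredient of the constant: the exponent is $(m+1)/(m-1) = 5/3$; the factor $m+3$ is $7$; the factorial $(m-1)!$ is $6$; and $\sigma_4 = 2\pi^{2}/\Gamma(2) = 2\pi^{2}$. Substituting gives the prefactor
\begin{equation*}
    \frac{7}{\bigl(2\pi^{2}\cdot 6\bigr)^{2/3}}\left(\frac{5}{3}\right)^{5/3} \;=\; \frac{7\cdot 5^{5/3}}{(12\pi^{2})^{2/3}\,3^{5/3}}.
\end{equation*}

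The remaining step is the elementary simplification. Writing $(12\pi^2)^{2/3} = 2^{4/3}\,3^{2/3}\,\pi^{4/3}$ and combining with $3^{5/3}$ in the denominator yields $2^{4/3}\,3^{7/3}\,\pi^{4/3}$; pulling out a factor of $\sqrt[3]{\tfrac{1}{6\pi^{4}}}$ and recognizing $5^{5/3} = 5\cdot 5^{2/3} = 5\sqrt[3]{25}$, one obtains the compact form $\frac{35}{18}\sqrt[3]{\frac{25}{6\pi^{4}}}$, and a numerical check gives approximately $0.680026$.

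There is no real obstacle here: the only subtle point is being careful that the chosen bi-invariant inner product on $\mathfrak{su}(2)$ produces exactly the round metric of radius one on $\mathbb{S}^3$ (otherwise a scaling factor would appear in both sides of the inequality and cancel, but the constant as written assumes the standard normalization). Once the identification is fixed in this way, the corollary is immediate and the work reduces to the arithmetic simplification above.
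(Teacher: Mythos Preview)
Your proposal is correct and follows exactly the approach indicated in the paper: the corollary is stated there as a direct consequence of Theorem~\ref{teomain} with $m=4$ via the identification $SU(2)\cong\mathbb{S}^3$, and your computation of the constant is accurate. The only addition you make beyond the paper is spelling out the isometry and the arithmetic simplification, which is fine.
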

We note that in Corollary \ref{corosu2} integration is taken with respect to the standard non-normalized Haar measure on $SU(2)$ (that is, $\text{vol}(SU(2))=2\pi^2$).  
\begin{theorem}\label{teoso3}
    Let $\{\psi_j\}_{j=1}^N\in H^1(SO(3))$ be an orthonormal family such that $\int_{SO(3)}\psi_j(x)dx=0$, for all $j$. Let
    \begin{equation*}
        \rho(x)\defeq\sum_{j=1}^N|\psi_j(x)|^2,
    \end{equation*}
    for $x\in SO(3)$. Then
    \begin{align*}
\int_{SO(3)}\rho(x)^{\frac{5}{3}}dx&\leq \frac{35}{2\cdot3^{\frac{2}{3}}}\sum_{j=1}^N\|\nabla \psi_j\|^2_2,
\end{align*}
where the integration is taken with respect to the normalized Haar measure on $SO(3)$, the gradient $\nabla\psi_j$ is as in \eqref{eqgraddef} and $\frac{35}{2\cdot3^{\frac{2}{3}}}\approx8.41312$. Alternatively,
\begin{align*}
\int_{SO(3)}\rho(x)^{\frac{5}{3}}dx&\leq  \frac{35}{2\cdot3^{\frac{2}{3}}\pi^{\frac{4}{3}}}\sum_{j=1}^N\|\nabla \psi_j\|^2_2,
\end{align*}
where the integration is taken with respect to the standard non-normalized measure on $SO(3)$ 
(that is $\text{vol}(SO(3))=\pi^2$), and $\frac{35}{2\cdot3^{\frac{2}{3}}\pi^{\frac{4}{3}}}\approx1.82848$. 
\end{theorem}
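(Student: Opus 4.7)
The plan is to carry out on $SO(3)$ the same two-scale scheme that underlies Theorem~\ref{teomain}, but using the intrinsic Peter-Weyl decomposition of $L^2(SO(3))$. The irreducible unitary representations of $SO(3)$ are indexed by $\ell\in\mathbb{N}_0$ with dimension $2\ell+1$, and the $\ell$-th isotypic subspace $E_\ell\subset L^2(SO(3))$ has dimension $(2\ell+1)^2$ and carries the Laplace eigenvalue $\ell(\ell+1)$. I would fix a cutoff $L\ge 1$, set $F_L := \bigoplus_{\ell=1}^{L} E_\ell$ (dropping the trivial component $\ell=0$ is permitted by the mean-zero hypothesis), and let $P_L$ denote the orthogonal projection onto $F_L$. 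Two basic spectral estimates then follow: by bi-invariance of the Haar measure the diagonal of the reproducing kernel of $F_L$ is constant on $SO(3)$, so Bessel's inequality applied to the orthonormal family yields the pointwise bound $\sum_j|P_L\psi_j(x)|^2\le d_L$ where $d_L := \sum_{\ell=1}^{L}(2\ell+1)^2$, while the spectral gap above the cutoff gives $\sum_j\|(I-P_L)\psi_j\|_2^2 \le ((L+1)(L+2))^{-1}\sum_j\|\nabla\psi_j\|_2^2$.

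These are then combined via the splitting $\psi_j = P_L\psi_j + (I-P_L)\psi_j$, a Cauchy-Schwarz decomposition of $\rho$ involving a free parameter, and H\"older's inequality applied to $\int\rho^{5/3}\,dx$. The low-frequency piece is controlled by the $L^\infty$-bound on $\sum_j|P_L\psi_j|^2$, the high-frequency piece by the $L^2$-bound coming from the spectral gap. After optimizing in the splitting parameter, one is left with an inequality of the form $\int\rho^{5/3}\,dx \le \Phi(L)\sum_j\|\nabla\psi_j\|_2^2$, in which $\Phi(L)$ is an explicit function of $L$, $d_L$, and $(L+1)(L+2)$. Treating $L$ as continuous (using a monotone majorant of $d_L$, for instance its leading asymptotic $\tfrac{4}{3}L^3$, to keep the optimization tractable) and minimizing $\Phi$ in $L>0$ should collapse to the stated constant $\frac{35}{2\cdot 3^{2/3}}$.

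The second version of the inequality then follows by a plain rescaling: since $\mathrm{vol}(SO(3))=\pi^2$, passing from the normalized to the non-normalized Haar measure rescales each $\psi_j$ by a factor of $\pi$, which carries a factor $\pi^{4/3}$ through $\int\rho^{5/3}\,dx$ while leaving $\sum_j\|\nabla\psi_j\|_2^2$ unchanged, yielding the constant $\frac{35}{2\cdot 3^{2/3}\pi^{4/3}}$. I expect the main obstacle to lie in the optimization step: to recover the precise closed-form constant, one must choose the Cauchy-Schwarz parameter and the continuous relaxation of $d_L$ in a coordinated way so that the algebra collapses exactly rather than leaving a larger prefactor. A secondary subtlety is that the $(2\ell+1)^2$ multiplicity on $SO(3)$ is structurally different from the $(\ell+1)^2$ multiplicity of spherical harmonics on $\mathbb{S}^3\cong SU(2)$, so the estimate is genuinely intrinsic to $SO(3)$ rather than being a routine transfer of Theorem~\ref{teomain} at $m=4$ through the double cover.
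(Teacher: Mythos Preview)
Your outline has a structural gap: a \emph{single} cutoff $L$ together with the two estimates you list cannot be combined to produce an inequality of the form $\int\rho^{5/3}\le \Phi(L)\sum_j\|\nabla\psi_j\|_2^2$. The high-frequency bound is only an $L^1$ estimate on $\rho_{\mathrm{high}}=\sum_j|(I-P_L)\psi_j|^2$, namely $\int\rho_{\mathrm{high}}\le((L+1)(L+2))^{-1}\sum_j\|\nabla\psi_j\|_2^2$, and no H\"older interpolation against the pointwise bound $\rho_{\mathrm{low}}\le d_L$ controls $\int\rho^{5/3}$ without an extraneous factor of $N=\int\rho$ or $\|\rho\|_\infty$ entering on the right. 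The underlying reason is that the optimal cutoff must depend on $\rho(x)$, and a global choice of $L$ cannot accommodate this; optimising a single $L$ afterwards does not repair the loss. This is also not the scheme behind Theorem~\ref{teomain}: that proof already uses the continuous-cutoff mechanism below.

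The paper's argument rests on the Rumin-type identity $\dot{\mathcal L}=\int_0^\infty P_E^\perp\,dE$, which gives
\[
\sum_{j=1}^N\|\nabla\psi_j\|_2^2=\int_{SO(3)}\int_0^\infty \rho_{P_E^\perp\Gamma P_E^\perp}(x)\,dE\,dx .
\]
One then uses, for \emph{every} $E>0$, the pointwise inequality $\rho(x)^{1/2}\le K^{1/2}C(E)^{1/2}+\rho_{P_E^\perp\Gamma P_E^\perp}(x)^{1/2}$ with $C(E)$ majorising $\sum_{\ell(\ell+1)<E}(2\ell+1)^2$, and integrates the resulting lower bound $(\rho(x)^{1/2}-K^{1/2}C(E)^{1/2})_+^2$ over $E$; it is this $E$-integration that manufactures the power $\rho^{5/3}$ and the coefficient $9/35$. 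The explicit constant comes from the exact sum $\sum_{\ell=1}^n(2\ell+1)^2=\tfrac13(4n^3+12n^2+11n)$, whose ratio to $E^{3/2}$ is decreasing and hence maximised at the first eigenvalue $E=2$, giving $K=9\sqrt{2}/4$ (not the asymptotic $4/3$ you propose, which would fail as an upper bound for small $E$). Your rescaling remark for the non-normalised measure is correct.
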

 
We remark the similarity between the constants for $SU(2)$ and $SO(3)$, which is to be expected given the close connection between $SU(2)$ and $SO(3)$ (see for instance \cite{RuzPseudo2010}).

Here, the spaces $H^1(\sph)$, $H^1(SU(2))$ and $H^1(SO(3))$ denote the Sobolev spaces of order $1$ on each corresponding manifold.  We note that for all values of $m\geq 4$ that we have checked, the upper bounds for $k_{\sph}$ obtained in Theorem \ref{teomain} improve the previously best known (to the authors' knowledge) upper bounds given in \cite{nsphere}.

We present a table with values for the constants obtained, and how they compare with the values obtained previously in \cite{nsphere}, \cite{liebS2best} and \cite{lieb4best}. 
     \makegapedcells
\begin{table}[H]
\begin{tabular}{||c c c c c||} 
\hline
\multicolumn{5}{|c|}{Upper bounds for the constant $k_{\mathbb{S}^{m-1}}$
}\\
 \hline
 $m$ & From Theorem \ref{teomain} & From \cite{nsphere} & From \cite{liebS2best} & From \cite{lieb4best} \\ [0.5pt]  
 \hline 
 3 & $\frac{3}{\pi}\approx0.956$ & $\approx1.777$ & $\frac{3\pi}{32}\approx 0.2945$ & -\\ [2ex] 
 \hline
 4 & $\frac{35}{18}\sqrt[3]{\frac{25}{6\pi^{4}}}\approx0.6800$  & $\approx1.645$ & - &-\\
 \hline
 5 & $\frac{3\sqrt{3/2}}{2\pi}\approx0.5847$ & $\approx1.755$ & - &$ 0.1728$ \\
 \hline
 6 & $\frac{21}{10}\left(\frac{3^{3}\cdot7^{2}}{2\cdot5^{4}\pi^{6}}\right)^{\frac{1}{5}}\approx0.5377$ & $\approx2.009$ & - &-\\ [1ex] 
 \hline
\end{tabular}
 \caption{Upper bounds for the best constant of the Lieb-Thirring inequality on $\mathbb{S}^{m-1}$.}\label{table1}
 \end{table}
 This paper is organized as follows:\\
\indent In Section \ref{mainresults} we begin by presenting the proof of Theorem \ref{teomain}. 
 We conclude the section by recalling some of the theory of Fourier analysis on compact Lie groups and then presenting the proof of Theorem \ref{teoso3}.\\
\indent In Section \ref{fremarks} we conclude with some final remarks and ideas for future projects.
\section{Results and proofs}\label{mainresults}

\begin{proof}[Proof of Theorem \ref{teomain}]
First recall (see for instance \cite{harmonics}) that the spherical harmonics $Y_{mn}^\ell$,  satisfy
\begin{equation*}
    \mathcal{L}Y_{mn}^\ell=\lambda_{mn}Y_{mn}^\ell
\end{equation*}
where $\lambda_{mn}=n(n+m-2)$, for $n\in\N_0$, $1\leq \ell\leq k_{mn}$, and $\mathcal{L}=-\Delta$ denotes the positive Laplace-Beltrami operator $\Delta=\text{div }\nabla$. These eigenvalues have multiplicity
\begin{equation*}
    k_{mn}=(2n+m-2)\frac{(n+m-3)!}{(m-2)!n!}.
\end{equation*}
Moreover, the following identity also holds, for $x\in\sph$:
\begin{equation}\label{eq1su2}
   \sum_{\ell=1}^{k_{mn}}|Y^\ell_{mn}(x)|^2=\frac{k_{mn}}{\sigma_m},
\end{equation}
where $\sigma_m$ denotes the area of the $m-1$ sphere, i.e.: $\sigma_m=2\pi^{\frac{m}{2}}/\Gamma(\frac{m}{2})$.

    Next, fix $m\geq 2$. We exclude the constant eigenfunction and introduce the following notation for labeling the eigenfunctions and corresponding eigenvalues of the positive Laplacian with a single subscript counting multiplicities:
    \begin{equation*}
        \mathcal{L}y_j=\lambda_jy_j,
    \end{equation*}
    with $\lambda_1\leq \lambda_2\leq \dots$, where 
    \begin{equation}\label{eqdefy_j}
        \{y_j\}_{j=1}^\infty=\left\{Y_{mn}^\ell,\dots\right\},
    \end{equation}
    and
    \begin{equation*}
        (\lambda_j)_{j=1}^\infty =(n(n+m-2),\dots),
    \end{equation*}
    where the eigenvalue $n(n+m-2)$ is repeated $k_{mn}$ times.
    For $E\geq 0$, define the spectral projections:
    \begin{equation*}
        P_E=\sum_{\lambda_j<E}\langle\cdot,y_j\rangle y_j
    \end{equation*}
    and
    \begin{equation*}
        P_E^\perp=\sum_{\lambda_j\geq E}\langle\cdot,y_j\rangle y_j.
    \end{equation*}
    We denote by $\dot{\mathcal{L}}$ the positive Laplacian restricted to the invariant subspace of functions orthogonal to constants, that is, 
    \begin{equation*}
        \dot{\mathcal{L}} =P_{\lambda_1}^\perp\circ\mathcal{L}\circ P_{\lambda_1}^\perp,
    \end{equation*}
    where $\lambda_1$ is the smallest non-zero eigenvalue of $\mathcal{L}$.
    Then 
    \begin{equation}\label{lapnoncte}
        \dot{\mathcal{L}} =\sum_{j=1}^\infty \lambda_j\langle\cdot,y_j\rangle y_j,
    \end{equation}
    since the spherical harmonics form an orthonormal basis of $L^2(\sph)$. Next, notice that
    \begin{align*}
        \sum_{j=1}^\infty\lambda_j a_j&=(\lambda_1-0)\sum_{j=1}^\infty a_j+(\lambda_2-\lambda_1)\sum_{j=2}^\infty a_j +\dots=\int_0^\infty\sum_{\lambda_j\geq E}a_j dE.
    \end{align*}
    Indeed, this follows from the fact that for every $n\geq 2$, the integrand on the right hand side is constant and equal to $\sum_{j=n}^{\infty}a_j$ for $\lambda_{n-1}\leq E< \lambda_{n-1}$, and equal to $\sum_{j=1}^{\infty}a_j$ for $0\leq E<\lambda_1$.
    It follows that
    \begin{align}\label{eq1}
        \dot{\mathcal{L}} &=\int_0^\infty\sum_{\lambda_j\geq E}\langle\cdot,y_j\rangle y_j dE=
        \int_0^\infty P_E^\perp dE.
    \end{align}
    Let $\Gamma$ be the finite rank orthogonal projection:
    \begin{equation*}
        \Gamma=\sum_{j=1}^N\langle\cdot,\psi_j\rangle\psi_j.
    \end{equation*}
    Note that since $\int_{\sph} \psi_j(x)dx=0$ for $1\leq j\leq N$, the $\psi_j$ are orthogonal to the constant eigenfunction and so $\mathcal{L}\psi_j=\dot{\mathcal{L}}\psi_j$, for $1\leq j\leq N$. Next, in view of \eqref{lapnoncte} and \eqref{eq1}, we have 
    \begin{align*}
        \sum_{j=1}^N\langle\mathcal{L}\psi_j,\psi_j\rangle=\sum_{j=1}^N\langle\dot{\mathcal{L}}\psi_j,\psi_j\rangle&=\sum_{j=1}^N\int_{0}^\infty\langle P_E^\perp\psi_j,\psi_j\rangle dE\\
        &=\sum_{j=1}^N\int_0^\infty \|P_E^\perp \psi_j\|_2^2dE\\
        &=\sum_{j=1}^N\int_0^\infty\int_{\sph}|P_E^\perp \psi_j(x)|^2dxdE\\
        &=\int_{\sph}\int_0^\infty\sum_{j=1}^N|P_E^\perp \psi_j(x)|^2dEdx.
    \end{align*}
\noindent  Also, from Green's identity we know that
\begin{align*}
    \sum_{j=1}^N\|\nabla\psi_j\|^2_{L^2}=\sum_{j=1}^N\langle\mathcal{L}\psi_j,\psi_j\rangle_{L^2(\sph)}.
\end{align*}
    Therefore we obtain
    \begin{align}\label{eqgradient}
\sum_{j=1}^N\|\nabla\psi_j\|^2_{L^2}=\int_{\sph}\int_0^\infty\rho_{P_E^\perp\Gamma P_E^\perp}(x)\mathop{dE}dx,
    \end{align}
    where
    \begin{equation*}
        \rho_{P_E^\perp\Gamma P_E^\perp}(x)\defeq\sum_{j=1}^N|P_E^\perp \psi_j(x)|^2.
    \end{equation*}
    Now let $B$ be a neighbourhood around $x_0\in \sph$, with measure $|B|\leq 1$, and let $\chi_B$ be the corresponding characteristic function. Then
    \begin{align}\label{eq2}
        \left(\int_B\rho(x)dx\right)^{\frac{1}{2}}&=\|\Gamma\chi_{B}\|_{HS}\notag\\
        &\leq \|\Gamma P_E\chi_B\|_{HS}+\|\Gamma P_E^\perp\chi_B\|_{HS}\notag\\
        &=\|\Gamma P_E \chi_B\|_{HS}+\left(\int_B\rho_{P_E^\perp\Gamma P_E^\perp}(x)dx\right)^{\frac{1}{2}}.
    \end{align}
    Using that $\Gamma$ is bounded ($\|\Gamma\|\leq1$) and the fact that both $\chi_B$ and $P_E$ are projections, for $y_j$ as defined in \eqref{eqdefy_j}, we find that:
    \begin{align*}
        \|\Gamma P_E \chi_B\|_{HS}^2&\leq \|P_E\chi_B\|_{HS}^2\\
&=\sum_{\lambda_j<E}\int_{\mathbb{S}^{m-1}}|y_j(x)|^2\chi_B(x)dx\\
        &=\sum_{\substack{n(n+m-2)<E\\n\in\N}}\int_{\mathbb{S}^{m-1}}\sum_{\ell=1}^{k_{mn}}|Y_{mn}^\ell(x)|^2\chi_B(x)dx\\
        &=|B|\sum_{\substack{n(n+m-2)<E\\n\in\N}}\frac{(2n+m-2)\frac{(n+m-3)!}{(m-2)!n!}}{\sigma_m},
    \end{align*}
    where we used identity \eqref{eq1su2} in the last step, and $\mathbb{N}$ denotes the set of all positive integers. Let  $n(E) = \frac{-(m-2)+\sqrt{(m-2)^2+4E}}{2}$ denote the positive root of the equation
    \begin{equation*}
        x(x+m-2)-E=0.
    \end{equation*} For $E>(m-1)$, we then have that
    \begin{align*}
        \sum_{\substack{n(n+m-2)<E\\n\in\N}}\frac{(2n+m-2)\frac{(n+m-3)!}{(m-2)!n!}}{\sigma_m}&\leq \frac{1}{\sigma_m}\sum_{\substack{n(n+m-2)\leq E\\n\in\N}}{(2n+m-2)\frac{(n+m-3)!}{(m-2)!n!}}\\
        &= \frac{1}{\sigma_m}\left(\frac{m(m+2\lfloor n(E)\rfloor-1)(m+\lfloor n(E)\rfloor -2)!}{m!\lfloor n(E)\rfloor!}-1\right)\\
        &\leq \frac{1}{\sigma_m}\left(\frac{m(m+2 \lfloor n(E)\rfloor-1)(m+ \lfloor n(E)\rfloor -2)^{m-2}}{m!}\right)\\
        &\leq\frac{1}{\sigma_m}\left((m+2n(E)-1)\frac{(m+n(E)-2)^{m-2}}{(m-1)!}\right)\\
        &= \frac{1}{\sigma_m(m-1)!}\left((m-1)-(m-2)+\sqrt{(m-2)^2+4E})\right)\\
        &\left.\times\left(\frac{2(m-2)-(m-2)+\sqrt{(m-2)^2+4E}}{2}\right)^{m-2}\right)\\
&= \frac{1}{\sigma_m(m-1)!}\left(1+\sqrt{(m-2)^2+4E})\right)\\
        &\left.\times\left(\frac{2(m-2)-(m-2)+\sqrt{(m-2)^2+4E}}{2}\right)^{m-2}\right)\\
        &= \frac{{E}^{\frac{m-1}{2}}}{\sigma_m(m-1)!}\frac{(1+\sqrt{(m-2)^2+4E})}{\sqrt{E}}\\
        &\times\left(\frac{m-2+\sqrt{(m-2)^2+4E}}{2\sqrt{E}}\right)^{m-2}.
 \end{align*}
 The proof of the first equality (on the second line) can be found in Lemma \ref{lemmap1}. 
 Since $E\mapsto \frac{(1+\sqrt{(m-2)^2+4E})}{\sqrt{E}}$ and $E\mapsto\left(\frac{m-2+\sqrt{(m-2)^2+4E}}{2\sqrt{E}}\right)^{m-2}$ are both decreasing for $E>m-1$, we have that
        \begin{align*}
        \frac{{E}^{\frac{m-1}{2}}}{\sigma_m(m-1)!}\frac{(1+\sqrt{(m-2)^2+4E})}{\sqrt{E}}&\left(\frac{m-2+\sqrt{(m-2)^2+4E}}{2\sqrt{E}}\right)^{m-2}\\
        &\leq \frac{E^{\frac{m-1}{2}}}{\sigma_m(m-1)!}\frac{\left(1+\sqrt{m^2-4m+4+4(m-1)}\right)}{\sqrt{m-1}}\\
        &\times\left(\frac{m-2+\sqrt{m^2-4m+4+4(m-1)}}{2\sqrt{m-1}}\right)^{m-2}\\
        &= \frac{{E}^{\frac{m-1}{2}}}{\sigma_m(m-1)!} (m+1) \left(m-1\right)^{\frac{m-3}{2}} \\
        &=K_m{E}^{\frac{m-1}{2}},
    \end{align*} 
    where $K_m\defeq\frac{(m+1)(m-1)^{\frac{m-3}{2}}}{\sigma_m(m-1)!}$.  On the other hand 
    \noindent $\sum_{\substack{n(n+m-2)<E\\n\in\N}}\frac{(2n+m-2)\frac{(n+m-3)!}{(m-2)!n!}}{\sigma_m}=0$ for $0<E\leq m-1$. With this in mind, let 
    \begin{equation}
        C(E)\defeq \begin{cases}
           {E}^{\frac{m-1}{2}},\,\text{ if }E>m-1,\\
            0,\,\qquad\text{ otherwise}.
        \end{cases}
    \end{equation}
    Substituting this in \eqref{eq2}, dividing the equation by $|B|$, and letting $|B|\to 0$, we obtain
    \begin{equation*}
        \rho(x_0)^{1/2}\leq K_m^{1/2}C(E)^{1/2}+\rho_{P_E^\perp\Gamma P_E^\perp}(x_0)^{1/2}, 
    \end{equation*}
    for almost every $x_0\in \sph$.
    Since $\rho_{P_E^\perp\Gamma P_E^\perp}\geq0$, this implies
    \begin{equation*}
        \rho_{P_E^\perp\Gamma P_E^\perp}(x_0)\geq (\rho(x_0)^{1/2}-K_m^{1/2}C(E)^{1/2})_+^2,
    \end{equation*}
    for almost every $x_0\in \sph$, where the expression $(x)_+$ for  denotes the function $x\mapsto (x)_+=\max\{x,0\}$ for $x\in\R$. Integration over $x_0\in \sph$ and equality \eqref{eqgradient} then imply
    \begin{equation}\label{eqgradientfinal}
         \sum_{j=1}^N\|\nabla\psi_j\|^2_{2}\geq \int_{\sph}\int_0^\infty(\rho(x)^{1/2}-K_m^{1/2}C(E)^{1/2})_+^2 dEdx.
    \end{equation}
    Therefore, what is left is to estimate this last integral. 
   We proceed as follows.
    Define
    \begin{equation}\label{Irho}
        I(\rho)\defeq\int_0^\infty(\rho^{1/2}-K_m^{1/2}C(E)^{1/2})_+^2 dE.
    \end{equation}
  For $\rho_m=\rho/K_m$, and $I_1(\rho_m)\defeq \int_0^\infty(\rho_m^{1/2}-C(E)^{1/2})_+^2 dE$, we have that
        \begin{equation}
        {I_1(\rho_m)}\cdot{K_m}={\int_0^\infty\left({{\rho}^{1/2}}\cdot{K_m^{-1/2}}-C(E)^{1/2}\right)_+^2 dE}\cdot {K_m}=I(\rho).
    \end{equation}
Then since $C(E)=0$ for $0<E<m-1$, and $E\mapsto {E}^{\frac{m-1}{2}}$ is increasing for $E>m-1$, for $\rho_m> (m-1)^{{\frac{m-1}{2}}}$ we have that
    \begin{align*}
        I_1(\rho_m)&\geq \int_0^{m-1}({\rho_m}^{1/2}-0)^2dE+\int_{m-1}^{+\infty}({\rho_m}^\frac{1}{2}-{{E}^{\frac{m-1}{4}}})_+^2dE\\ 
        &=(m-1)\rho_m+\int_{m-1}^{\rho_m^{\frac{2}{m-1}}}({\rho_m}^\frac{1}{2}-{{E}^{\frac{m-1}{4}}})^2dE\\
        &=(m-1)\rho_m+\rho_m^{\frac{m+1}{m-1}}+\frac{2 \rho_m^{\frac{m+1}{m-1}}-2 (m-1)^{\frac{m+1}{2}}}{m+1}+\frac{8 \sqrt{\rho_m} \left((m-1)^{\frac{m+3}{4}}-\rho_m^{\frac{m+3}{2m-2}}\right)}{m+3}-m \\
        &-(m-1)\rho_m.
    \end{align*}
    Therefore
    \begin{align*}
        \frac{I_1(\rho_m)}{\rho_m^{\frac{m+1}{m-1}}}&\geq \rho_m^{-\frac{m+1}{m-1}} \left(\rho_m^{\frac{m+1}{m-1}}+\frac{2 \rho_m^{\frac{m+1}{m-1}}-2 (m-1)^{\frac{m+1}{2}}}{m+1}+\frac{8 \sqrt{\rho_m} \left((m-1)^{\frac{m+3}{4}}-\rho_m^{\frac{m+3}{2m-2}}\right)}{m+3}\right)\\
        &= \frac{ \left((m-1)^2 \rho_m^{\frac{m+1}{m-1}}+8 (m+1) (m-1)^{\frac{m+3}{4}} \sqrt{\rho_m}-2 (m+3) (m-1)^{\frac{m+1}{2}}\right)}{\rho_m^{\frac{m+1}{m-1}}(m+1)(m+3)}\eqdef J(\rho_m,m).
    \end{align*}
Differentiating $J$ with respect to the first variable, we find that
\begin{equation*}
    \frac{d}{dx}J(x,m)=2 \left((m-1)^{\frac{m-1}{2}}-2 (m-1)^{\frac{m-1}{4}} \sqrt{x}\right) x^{-\frac{2 m}{m-1}}<0,
\end{equation*}
for $x>(m-1)^{\frac{m-1}{2}}$. Therefore $J(\cdot,m)$ is decreasing for all such $x$ and we conclude that 
\begin{align*}
    \inf_{\rho_m>(m-1)^{\frac{m-1}{2}}} \left\{\frac{I_1(\rho_m)}{\rho_m^{\frac{m+1}{m-1}}}\right\}\geq\lim_{x\to\infty}J(x,m)=\frac{(m-1)^2}{(m+1)(m+3)}.
\end{align*}
 On the other hand, for $0<\rho_m\leq (m-1)^{\frac{m-1}{2}}$, we have that
\begin{align*}
    \frac{I_1(\rho_m)}{\rho_m^{\frac{m+1}{m-1}}}=\frac{\displaystyle\int_0^{m-1}\rho_m\mathop{dE}+0}{\rho_m^{\frac{m+1}{m-1}}}=(m-1)\rho_m^{-\frac{2}{m-1}}\geq 1,
\end{align*}
since $x\mapsto x^{-\frac{2}{m-1}}$ is decreasing. 
Therefore 
\begin{align*}
    \frac{I_1(\rho_m)}{\rho_m^{\frac{m+1}{m-1}}}\geq \min\left\{1,\frac{(m-1)^2}{(m+1)(m+3)}\right\}=\frac{(m-1)^2}{(m+1)(m+3)}
\end{align*}
for every $\rho_m>0$. This implies
    \begin{align*}
    \frac{I(\rho)}{\rho^{\frac{m+1}{m-1}}}=\frac{I_1(\rho_m)\cdot K_m}{(\rho_m)^{\frac{m+1}{m-1}}\cdot (K_m)^{\frac{m+1}{m-1}}}\geq \frac{(m-1)^2}{(m+1)(m+3)}\cdot (K_m)^{-\frac{2}{m-1}},
    \end{align*}
for every $\rho>0$.
 Applying this inequality to  \eqref{eqgradientfinal}, we obtain
 \begin{align*}
        \sum_{j=1}^N\|\nabla \psi_j\|_2^2 &\geq \frac{(m-1)^2}{(m+1)(m+3)} (K_m)^{-\frac{2}{m-1}}\int_{\sph}\rho(x)^{\frac{m+1}{m-1}}dx\\
        &=\frac{(m-1)^2}{(m+1)(m+3)}\left(\frac{(m+1)(m-1)^{\frac{m-3}{2}}}{\sigma_m(m-1)!}\right)^{-\frac{2}{m-1}}\int_{\sph}\rho(x)^{\frac{m+1}{m-1}}dx\\
        &=\frac{(m-1)!^{\frac{2}{m-1}}\sigma_m^{\frac{2}{m-1}}(m-1)^{\frac{m+1}{m-1}}}{(m+3)(m+1)^{\frac{m+1}{m-1}}}\int_{\sph}\rho(x)^{\frac{m+1}{m-1}}dx,
 \end{align*}
 which concludes the proof.
\end{proof}

\begin{lemma}\label{lemmap1}
    Let $m, n\in\N$, $m\geq2$. Then 
    \begin{equation}\label{eqap}
        \sum_{\ell=1}^{n}{(2\ell+m-2)\frac{(\ell+m-3)!}{(m-2)!\ell!}}=\frac{m(m+2n-1)(m+n-2)!}{m!n!}-1.
    \end{equation}
\end{lemma}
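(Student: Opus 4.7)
The plan is to prove the identity by telescoping. Define
\[
T_n \defeq \frac{m(m+2n-1)(m+n-2)!}{m!\,n!},
\]
and let $a_\ell \defeq (2\ell+m-2)\frac{(\ell+m-3)!}{(m-2)!\,\ell!}$ denote the $\ell$-th summand on the left-hand side of \eqref{eqap}. The claim to be proved is $\sum_{\ell=1}^n a_\ell = T_n - 1$, so it suffices to show that $T_0 = 1$ and that $T_\ell - T_{\ell-1} = a_\ell$ for every $\ell \ge 1$; the identity then follows by telescoping $\sum_{\ell=1}^n (T_\ell - T_{\ell-1}) = T_n - T_0$.

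First I would check the initial value: $T_0 = \frac{m(m-1)(m-2)!}{m!} = 1$, which is immediate. Next I would compute $T_\ell - T_{\ell-1}$ by pulling out the common factor $\frac{m(m+\ell-3)!}{m!\,\ell!}$, obtaining
\[
T_\ell - T_{\ell-1} \;=\; \frac{m(m+\ell-3)!}{m!\,\ell!}\bigl[(m+2\ell-1)(m+\ell-2) - (m+2\ell-3)\,\ell\bigr].
\]
The heart of the argument is the algebraic simplification of the bracketed expression. Expanding both products and collecting terms yields $m^2 + 2m\ell - 3m - 2\ell + 2$, and the main (minor) obstacle is recognizing that this polynomial factors as $(m-1)(m+2\ell-2)$. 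Once this factorization is in hand, using $\frac{m(m-1)}{m!} = \frac{1}{(m-2)!}$ immediately gives
\[
T_\ell - T_{\ell-1} \;=\; \frac{(m+2\ell-2)(m+\ell-3)!}{(m-2)!\,\ell!} \;=\; a_\ell,
\]
which is precisely the $\ell$-th term of the sum in \eqref{eqap}.

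Telescoping from $\ell = 1$ to $\ell = n$ then yields $\sum_{\ell=1}^{n} a_\ell = T_n - T_0 = T_n - 1$, which is the desired identity. The only nontrivial step is the bracket factorization above; everything else is bookkeeping with factorials. Alternatively, one could proceed by induction on $n$ with essentially the same algebra (base case $n=1$ gives $m = (m+1) - 1$, and the inductive step reduces to the same polynomial identity), but the telescoping formulation is cleaner because it makes it transparent why the closed form has exactly this shape.
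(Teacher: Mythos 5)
Your proof is correct, and the telescoping identity $T_\ell - T_{\ell-1} = a_\ell$ together with $T_0=1$ is exactly the content of the paper's own inductive argument (base case $n=1$ plus the same polynomial factorization in the inductive step), so the two proofs are essentially identical up to presentation.
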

\begin{proof}
    We shall prove this by induction on $n\in\N$. Indeed, notice that for $n=1$, the right-hand side of the equality \eqref{eqap} is equal to 
    \begin{equation*}
        \frac{m(m+1)(m-1)!}{m!}-1=m,
    \end{equation*}
    while the left-hand side is given by
    \begin{equation*}
        (2+m-2)\frac{(1+m-3)!}{(m-2)!}=m.
    \end{equation*}
    This proves the case $n=1$. Now suppose the equality \eqref{eqap} holds for $n-1\in\N$. Then
    \begin{align*}
        \sum_{\ell=1}^{n}{(2\ell+m-2)\frac{(\ell+m-3)!}{(m-2)!\ell!}}&= \sum_{\ell=1}^{n-1}{(2\ell+m-2)\frac{(\ell+m-3)!}{(m-2)!\ell!}}+(2n+m-2)\frac{(n+m-3)!}{(m-2)!n!}\\
        &=\frac{m(m+2(n-1)-1)(m+(n-1)-2)!}{m!(n-1)!}-1\\
        &+(2n+m-2)\frac{(n+m-3)!}{(m-2)!n!}\\
        &=\frac{mn(m+2n-3)(m+n-3)!}{m!(n!}+(2n+m-2)\frac{(n+m-3)!}{(m-2)!n!}-1\\
        &=\frac{m(n+m-3)!(n(m+2n-3)+(m-1)(2n+m-2))}{m!n!}-1\\
        &=\frac{m(m+2n-1)(m+n-2)!}{m!n!}-1,
    \end{align*}
    which proves the formula holds for $n\in\N$ and finishes the proof.
\end{proof}

\subsection{The case of $SO(3)$}
First, we recollect some of the theory of Fourier analysis on compact Lie groups. A more detailed exposition of these subjects can be found in \cite{RuzPseudo2010}. 
Let $\text{Rep}({G})$ denote the set of all unitary irreducible representations on a compact Lie group $G$. Define $\xi,\eta\in\text{Rep}(G)$ to be equivalent, if there exists a linear bijection $A$ such that $\xi(g)\circ A= A\circ \eta(g)$, for every $g\in G$. We denote by $\widehat{G}$ the set of all equivalence classes of $\text{Rep}(G)$. Since $G$ is compact, every $\xi\in\text{Rep}(G)$ is finite dimensional, and we denote its dimension by $d_\xi\defeq\dim\xi$. Also, we shall always choose a matrix-valued representative for each class. Recall that (see \cite{RuzPseudo2010}) for the positive Laplacian (also known as the Casimir element) $\mathcal{L}$ on a compact Lie group $G$, there exist $\nu_{\xi}\geq0$ real numbers, corresponding to every matrix-valued irreducible unitary representation $\xi\in\text{Rep}(G)$, such that
\begin{equation*}
\mathcal{L}\sqrt{d_\xi}\xi_{ij}=\nu_\xi\sqrt{d_\xi}\xi_{ij},\,\forall 1\leq i,j\leq d_\xi,
\end{equation*}
where $\xi_{ij}$ denote the functions given by the coefficients of $\xi$. If we take exactly one representative of each $\xi\in[\xi]\in\widehat{G}$, then the eigenfunctions $\sqrt{d_\xi}\xi_{ij}$ form an orthonormal basis for $L^2(G)$ (with respect to the groups's normalized Haar measure), and each eigenvalue $\nu_{\xi}$ has therefore multiplicity $d_\xi^2$. Also, 
\begin{equation}\label{liegrouprep}
    \sum_{i,j=1}^{d_\xi}|\sqrt{d_\xi}\xi_{ij}(x)|^2=d_\xi\|\xi(x)\|_{HS}^2=d_\xi^2,
\end{equation}
where $\|\cdot\|_{HS}$ denotes the Hilbert-Schmidt norm. Note that if we consider the standard non-normalized Haar measure of $G$, then the orthonormal basis above has to be replaced by $\left\{\frac{\sqrt{d_\xi}\xi_{ij}}{\text{vol}(G)}\right\}$. In the case where $G=SO(3)$, the elements of $\widehat{SO(3)}$ are denoted by ${T}^\ell$, for each $\ell\in\N_0$. These are $2\ell+1$ dimensional representations.
 Let $D_1,D_2$ and $D_3$ denote the standard basis of left-invariant vector fields on $SO(3)$.
 More precisely, expressed in the local coordinates given by the Euler angles, these are given explicitly by 
\begin{align*}
   D_1&=\cos(\psi)\frac{\partial}{\partial \theta}+\frac{\sin(\psi)}{\sin(\theta)}\frac{\partial}{\partial\phi}-\frac{\cos(\theta)}{\sin(\theta)}\sin(\psi)\frac{\partial}{\partial\psi},\\
   D_2&=-\sin(\psi)\frac{\partial}{\partial \theta}+\frac{\cos(\psi)}{\sin(\theta)}\frac{\partial}{\partial\phi}-\frac{\cos(\theta)}{\sin(\theta)}\cos(\psi)\frac{\partial}{\partial\psi},\\
   D_3&=\frac{\partial}{\partial\psi},
\end{align*}
whenever $\sin(\theta)\neq 0$.
Then the positive Laplacian on $SO(3)$ is given by
\begin{equation*}
    \mathcal{L}=-(D_1^2+D_2^2+D_3^2).
\end{equation*}
Denote by $H^1(SO(3))$ the Sobolev space of order $1$ in $SO(3)$. For $\psi\in H^1(SO(3))$, define the gradient mapping $\nabla\psi:SO(3)\to\C^3$ given by
\begin{equation}\label{eqgraddef}
    \nabla\psi(x)=(D_1\psi(x),D_2\psi(x),D_3\psi(x)).
\end{equation}
Notice that
\begin{align}
    \|\nabla\psi\|_{L^2(SO(3))}^2&=\langle D_1\psi,D_1\psi\rangle_{L^2(SO(3))}+\langle D_2\psi,D_2\psi\rangle_{L^2(SO(3))}+\langle D_3\psi,D_3\psi\rangle_{L^2(SO(3))}\notag\\
    &=\langle -D_1^2\psi,\psi\rangle_{L^2(SO(3))}+\langle -D_2^2\psi,\psi\rangle_{L^2(SO(3))}+\langle -D_3^2\psi,\psi\rangle_{L^2(SO(3))}\notag\\
    &=\langle\mathcal{L}\psi,\psi\rangle_{L^2(SO(3))},\label{grad=lapla}
\end{align}
where the expression $\langle \cdot,\cdot\rangle_{L^2(SO(3))}$ denotes the usual $L^2$ inner product on $SO(3)$.

Excluding the trivial representation, we obtain the corresponding orthonormal set of non-constant eigenfunctions
 \begin{equation*}
        \{y_j\}_{j=1}^\infty=\left\{\sqrt{2\ell+1}T^\ell_{mn},\,-\ell\leq m,n\leq \ell,\,\ell\in\N\right\},
    \end{equation*}
    and their corresponding eigenvalues are given by 
    \begin{equation*}
       \{\lambda_j\}_{j=1}^\infty=\{\ell(\ell+1), \ell\in\N\}.
    \end{equation*}
    Each eigenvalue has multiplicity $(2\ell+1)^2$, since each coefficient of the matrix $T^\ell$ corresponds to a different eigenfunction. 

    We are now ready to present our final proof.

\begin{proof}
First let us fix the measure on $SO(3)$ as the standard normalized Haar measure. We proceed similarly to the proof of the case $\sph$, with the same notation, except now we obtain
    \begin{align*}
        \|\Gamma P_E \chi_B\|_{HS}^2&\leq \|P_E\chi_B\|_{HS}^2\\
&=\sum_{\lambda_j<E}\int_{SO(3)}|y_j(x)|^2\chi_B(x)dx\\
        &=\sum_{\substack{\ell(\ell+1)<E\\\ell\in\N}}\int_{SO(3)}\sum_{m,n=-\ell}^{\ell}|\sqrt{2\ell+1}T^\ell_{mn}(x)|^2\chi_B(x)dx\\
        &=|B|\sum_{{\substack{\ell(\ell+1)<E\\\ell\in\N}}}(2\ell+1)^2,
    \end{align*}
    where we used identity \eqref{liegrouprep} in the last step. Let $n(E)=\frac{-1+\sqrt{1+4E}}{2}$ be the positive root of the equation 
    \begin{equation*}
        x(x+1)-E=0.
    \end{equation*}
    Then, for $E>1(1+1)=2$ we have that
    \begin{align*}
        \sum_{{\substack{\ell(\ell+1)<E\\\ell\in\N}}}(2\ell+1)^2&\leq \sum_{{\substack{\ell(\ell+1)\leq E\\\ell\in\N}}}(2\ell+1)^2\\
        &=\frac{1}{3}(4\lfloor n(E)\rfloor^3+12\lfloor n(E)\rfloor^2+11\lfloor n(E)\rfloor)\\
        &\leq \frac{1}{3}(4 n(E)^3+12 n(E)^2+11 n(E))\\
        &=\scalebox{0.999}{$\frac{1}{3}\left(4\left(\frac{-1+\sqrt{1+4E}}{2}\right)^3+12\left(\frac{-1+\sqrt{1+4E}}{2}\right)^2+11\left(\frac{-1+\sqrt{1+4E}}{2}\right)\right)$}\\
        &=\scalebox{0.999}{$\frac{E^{\frac{3}{2}}}{3}\left(4\left(\frac{-1+\sqrt{1+4E}}{2\sqrt{E}}\right)^3+12\left(\frac{-1+\sqrt{1+4E}}{2E^{\frac{3}{4}}}\right)^2+11\left(\frac{-1+\sqrt{1+4E}}{2E^{\frac{3}{2}}}\right)\right).$}
   \end{align*}
   The proof of the first equality (on the second line) can be found in Lemma \ref{lemma2}.
   By differentiation, we see that the expression between the big parenthesis is decreasing for all $E\geq2$, hence it attains its maximum  at $E=2$. Therefore
     \begin{align*}
         \sum_{{\substack{\ell(\ell+1)<E\\\ell\in\N}}}(2\ell+1)^2&\leq  \frac{E^{\frac{3}{2}}}{3}\left(4\left(\frac{-1+3}{2\sqrt{2}}\right)^3+12\left(\frac{-1+3}{2^{\frac{7}{4}}}\right)^2+11\left(\frac{-1+3}{2^{\frac{5}{2}}}\right)\right)\\
        &=\frac{E^{\frac{3}{2}}}{3}\left({\sqrt{2}}+{3}{\sqrt{2}}+\frac{11}{{2\sqrt{2}}}\right)\\
        &=\frac{9\sqrt{2}E^{\frac{3}{2}}}{4}=KE^{\frac{3}{2}},
    \end{align*}
    where $K\defeq\frac{9\sqrt{2}}{4}$.
     On the other hand $ \sum_{\substack{n(n+1)<E\\n\in\N}}(2n+1)^2=0$ for $0<E\leq2$. With this in mind, let 
    \begin{equation}
        C(E)\defeq \begin{cases}
           {E}^{\frac{3}{2}},\,\quad\text{ if }E>2,\\
            0,\,\qquad\text{ otherwise}.
        \end{cases}
    \end{equation}
    Substituting this in the analogue of \eqref{eq2}, dividing the equation by $|B|$, and letting $|B|\to 0$, we obtain
    \begin{equation*}
        \rho(x_0)^{1/2}\leq K^{1/2}C(E)^{1/2}+\rho_{P_E^\perp\Gamma P_E^\perp}(x_0)^{1/2}, 
    \end{equation*}
    for almost every $x_0\in SO(3)$.
    Since $\rho_{P_E^\perp\Gamma P_E^\perp}\geq0$, this implies
    \begin{equation*}
        \rho_{P_E^\perp\Gamma P_E^\perp}(x_0)\geq (\rho(x_0)^{1/2}-K^{1/2}C(E)^{1/2})_+^2,
    \end{equation*}
    for almost every $x_0\in SO(3)$, where the expression $(x)_+$ for  denotes the function $x\mapsto (x)_+=\max\{x,0\}$ for $x\in\R$. Integration over $x_0\in SO(3)$ and equality \eqref{grad=lapla} then imply
    \begin{equation}\label{eqgradientfinalso3}
\sum_{j=1}^N\|\nabla\psi_j\|^2_{L^2}\geq \int_{SO(3)}\int_0^\infty(\rho(x)^{1/2}-K^{1/2}C(E)^{1/2})_+^2 dEdx.
    \end{equation}
    Therefore, what is left is to estimate this last integral. 
   We proceed as follows.
    Define
    \begin{equation}\label{Irho2}
        I(\rho)\defeq\int_0^\infty(\rho^{1/2}-K^{1/2}C(E)^{1/2})_+^2 dE.
    \end{equation}
  For $\rho_0=\rho/K$ and $I_0(\rho_0)=\int_0^\infty({\rho_0}^{1/2}-C(E)^{1/2})_+^2 dE$, we have that
        \begin{equation}
        I_0(\rho_0)\cdot K=\int_0^\infty({\rho}^{1/2}\cdot K^{-1/2}-C(E)^{1/2})_+^2 dE\cdot K=I(\rho).
    \end{equation}
Then since $C(E)=0$ for $0<E\leq 2$, and $E\mapsto {E}^{\frac{3}{2}}$ is increasing for $E>2$, for $\rho_0> 2^{{\frac{3}{2}}}$ we have that
    \begin{align*}
        I_0(\rho_0)&\geq \int_0^{2}({\rho_0}^{1/2}-0)^2dE+\int_{2}^{+\infty}({\rho_0}^\frac{1}{2}-{{E}^{\frac{3}{4}}})_+^2dE\\ 
        &=2\rho_0+\int_{2}^{\rho_0^{\frac{2}{3}}}({\rho_0}^\frac{1}{2}-{{E}^{\frac{3}{4}}})^2dE\\
        &=2\rho_0+\frac{1}{35} \left(9 \rho_0^{5/3}-70 \rho_0+80\cdot 2^{3/4} \sqrt{\rho_0}-56 \sqrt{2}\right)\\
        &=\frac{1}{35}\left(9 \rho_0^{5/3}+80\cdot 2^{3/4} \sqrt{\rho_0}-56 \sqrt{2}\right),
    \end{align*}
   where on the third line we simply perform an integration. Therefore
    \begin{align*}
        \frac{I_0(\rho_0)}{\rho_0^{\frac{5}{3}}}&\geq \frac{1}{35}\left(9+\frac{80\cdot2^{\frac{3}{4}}}{\rho_0^{\frac{7}{6}}}-\frac{56\sqrt{2}}{\rho_0^{\frac{5}{3}}}\right)\geq \frac{9}{35},\\
    \end{align*}
    for all $\rho_0>2^{\frac{3}{2}}$. On the other hand, for $0<\rho_0<2^{\frac{3}{2}}$ we have that
\begin{align*}
     \frac{I_0(\rho_0)}{\rho_0^{\frac{5}{3}}}=2\rho_0^{-\frac{2}{3}}&\geq1.
\end{align*}
Thus we conclude that 
\begin{align*}
    \frac{I(\rho)}{\rho}= \frac{I_0(\rho_0)}{\rho_0^{\frac{5}{3}}}\frac{K}{K^{\frac{5}{3}}}&\geq\min\left\{\frac{9}{35},1\right\}\left(\frac{9\sqrt{2}}{4}\right)^{-\frac{2}{3}}=\frac{9}{35}\left(\frac{4}{9\sqrt{2}}\right)^{\frac{2}{3}}=\frac{2\cdot3^{\frac{2}{3}}}{35},
\end{align*}
for all $\rho>0$. Applying this to inequality \eqref{eqgradientfinalso3}, we obtain
 \begin{align*}
        \sum_{j=1}^N\|\nabla \psi_j\|^2_2 &\geq \frac{2\cdot3^{\frac{2}{3}}}{35}\int_{SO(3)}\rho(x)^{\frac{5}{3}}dx.
 \end{align*}
 {\allowdisplaybreaks
Now let us consider the standard non-normalized Haar measure on $SO(3)$, where $\text{vol}(SO(3))=\pi^2$. Then as was already mentioned, we must use the orthonormal basis $\left\{\frac{\sqrt{2\ell+1}}{\pi^2}T^\ell_{mn},-\ell \leq m,n\leq \ell,\right.$ $\left.\ell\in\N\right\}$. Hence we may apply the same argument as before, but now}
\begin{align*}
    \sum_{\lambda_j<E}|y_j(x)|^2=\sum_{{\substack{\ell(\ell+1)<E\\\ell\in\N}}}\ \sum_{m,n=-\ell}^{\ell}\left|\frac{\sqrt{2\ell+1}}{\pi^2}T^\ell_{mn}(x)\right|^2=\sum_{{\substack{\ell(\ell+1)<E\\\ell\in\N}}}\frac{(2\ell+1)^2}{\pi^2}.
\end{align*}
Following the same steps as before, all we have to do is to replace the constant $K$ by $\frac{9\sqrt{2}}{4\pi^2}$, therefore we obtain
\begin{align*}
        \sum_{j=1}^N\|\nabla \psi_j\|^2_2 &\geq \frac{2\cdot3^{\frac{2}{3}}}{35}\pi^{\frac{4}{3}}\int_{SO(3)}\rho(x)^{\frac{5}{3}}dx,
 \end{align*}
 or equivalently 
    \begin{align*}
        \int_{SO(3)}\rho(x)^{\frac{5}{3}}dx&\leq  \frac{35}{2\cdot3^{\frac{2}{3}}\pi^{\frac{4}{3}}}\sum_{j=1}^N\|\nabla \psi_j\|^2_2,
    \end{align*}
    as claimed.
\end{proof}

\begin{lemma}\label{lemma2}
Let $m,n\in \N$, $m\geq 2$. Then
    \begin{equation*}
        \sum_{{\substack{\ell(\ell+1)\leq E\\\ell\in\N}}}(2\ell+1)^2=\frac{1}{3}(4\lfloor n(E)\rfloor^3+12\lfloor n(E)\rfloor^2+11\lfloor n(E)\rfloor),
    \end{equation*}
    where $n(E)=\frac{-1+\sqrt{1+4E}}{2}$.
\end{lemma}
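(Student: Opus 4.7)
The plan is to reduce the indicator condition to a plain index range and then apply a standard power-sum identity. Since the map $x \mapsto x(x+1)$ is strictly increasing on $[0,\infty)$, the condition $\ell(\ell+1) \leq E$ with $\ell \in \mathbb{N}$ is equivalent to $\ell \leq n(E)$, and since $\ell$ is an integer, to $\ell \leq \lfloor n(E) \rfloor$. Writing $N := \lfloor n(E) \rfloor$, the left-hand side of the claimed identity therefore collapses to
$$\sum_{\ell=1}^{N}(2\ell+1)^2.$$

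The remaining step is to verify the closed form
$$\sum_{\ell=1}^{N}(2\ell+1)^2 = \tfrac{1}{3}\bigl(4N^3 + 12N^2 + 11N\bigr).$$
The most direct way I would do this is to expand $(2\ell+1)^2 = 4\ell^2 + 4\ell + 1$ and substitute the classical formulas $\sum_{\ell=1}^{N}\ell = N(N+1)/2$ and $\sum_{\ell=1}^{N}\ell^2 = N(N+1)(2N+1)/6$, then simplify. This yields $\tfrac{2}{3}N(N+1)(2N+1) + 2N(N+1) + N$, which after factoring out $N/3$ gives $N\bigl[2(N+1)(2N+1) + 6(N+1) + 3\bigr]/3 = N(4N^2 + 12N + 11)/3$, exactly as required.

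Alternatively, mirroring the style of Lemma \ref{lemmap1}, one can run a short induction on $N$. The base case $N=1$ gives $(2\cdot 1+1)^2 = 9$ and $\tfrac{1}{3}(4+12+11) = 9$, and the inductive step reduces to the algebraic check
$$\tfrac{1}{3}\bigl(4N^3 + 12N^2 + 11N\bigr) + (2N+3)^2 = \tfrac{1}{3}\bigl(4(N+1)^3 + 12(N+1)^2 + 11(N+1)\bigr),$$
which after expanding the right-hand side amounts to verifying $3(2N+3)^2 = 12N^2 + 36N + 27$, an immediate identity.

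I do not foresee any genuine obstacle: the lemma is a bookkeeping identity whose only nontrivial content is translating the inequality $\ell(\ell+1)\leq E$ into the single bound $\ell\leq \lfloor n(E)\rfloor$, after which everything is an elementary polynomial calculation.
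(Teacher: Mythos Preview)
Your proposal is correct and essentially matches the paper's proof: both reduce the summation condition $\ell(\ell+1)\leq E$ to $\ell\leq\lfloor n(E)\rfloor$ via monotonicity, and then verify the closed form $\sum_{\ell=1}^{N}(2\ell+1)^2=\tfrac{1}{3}(4N^3+12N^2+11N)$. The paper carries out the latter by the induction argument you describe as your alternative; your primary route via the standard formulas for $\sum\ell$ and $\sum\ell^2$ is an equally valid and slightly more direct way to reach the same polynomial identity.
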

\begin{proof}
    Indeed, notice that since $n(E)$ is the positive root of the equation $x(x+1)-E=0$, and $x\mapsto x(x+1)-E$ is increasing for $x>0$, we have that $\ell(\ell+1)\leq E$ whenever $\ell\leq n(E)$. Therefore it is enough to prove that
    \begin{equation}\label{eq1lema2}
        \sum_{\ell=1}^n(2\ell+1)^2=\frac{1}{3}(4n^3+12n^2+11n)
    \end{equation}
    for every $n\in\N$. Indeed, for $n=1$ we have that both sides of equation \eqref{eq1lema2} equate to $9$, which proves the result in this case. Suppose now that \eqref{eq1lema2} holds for some $n\in\N$. Then
    \begin{align*}
         \sum_{\ell=1}^{n+1}(2\ell+1)^2&= \sum_{\ell=1}^n(2\ell+1)^2+(2(n+1)+1)^2\\
         &=\frac{1}{3}(4n^3+12n^2+11n)+4n^2+12n+9\\
         &=\frac{1}{3}(4n^3+24n^2+47n+27).
    \end{align*}
    On the other hand
    \begin{align*}
        \frac{1}{3}(4(n+1)^3+12(n+1)^2+11n)&= \frac{1}{3}(4n^3+12n^2+12n+4+12n^2+24n+12+11n+11)\\
        &=\frac{1}{3}(4n^3+24n^2+47n+27),
    \end{align*}
   proving that equality \eqref{eq1lema2} holds for $n+1$ and therefore for every $n\in\N$ by induction.
\end{proof}

\section{Final remarks}\label{fremarks}

To conclude this paper, we remark that the technique used in these proofs is very general and could also be applied to other compact Lie groups. Notice that the mapping $C(E)$ defined in both proofs could be replaced by eigenvalue counting function for the Laplacian (excluding the $0$ eigenvalue) up to a constant. More precisely, now consider an arbitrary compact Lie group $G$, $\dim G=n$, and enumerate the non-constant eigenfunctions of the positive Laplacian on $G$ by $\{y_j\}_{j=1}^\infty$, with respective eigenvalues $\{\lambda_j\}_{j=1}^\infty$. For $E\geq 0$, define the spectral projections 
\setlength{\belowdisplayskip}{0pt} \setlength{\belowdisplayshortskip}{0pt}
\begin{equation*}
    P_E=\sum_{\lambda_j<E}\langle\cdot\,y_j\rangle y_j, \quad\text{ and } \quad P_E^\perp=\sum_{\lambda_j\geq E}\langle\cdot\,y_j\rangle y_j.
\end{equation*}
\setlength{\belowdisplayskip}{1pt} \setlength{\belowdisplayshortskip}{1pt}
Given an orthonormal family $\{\psi_j\}_{j=1}^N$ in $L^2(G)$ with zero average, denote by $\Gamma = \sum_{j=1}^N\langle\cdot,\psi_j\rangle\psi_j$ its corresponding orthogonal projection.
Then by the same arguments as before, for the characteristic function $\chi_B$ of a neighbourhood $B$ around a point $x_0\in G$, we have that
 \begin{align*}
        \|\Gamma P_E \chi_B\|_{HS}^2&\leq \|P_E\chi_B\|_{HS}^2\\
&=\sum_{\lambda_j<E}\int_{G}|y_j(x)|^2\chi_B(x)dx\\
        &=\sum_{\nu_\xi<E}\int_{G}\sum_{i,j=1}^{d_\xi}|\sqrt{d_\xi}\xi_{ij}(x)|^2\chi_B(x)dx\\
        &=\sum_{\nu_\xi<E}d_\xi\int_{G}\|\xi(x)\|_{HS}^2\chi_Bdx\\
        &=|B|\sum_{\nu_\xi<E}d_\xi^2\\
        &= |B|C(E),
    \end{align*}
where we have used equality \eqref{liegrouprep} and the fact that the eigenvalue $\nu_\xi$ has multiplicity $d_\xi^2$ (each eigenfunction corresponding to a coefficient of the matrix $\xi(x)$). Here $C(E)$ denotes the (non-zero)-eigenvalue counting function of the Laplacian.
Therefore  if a precise formula for the eigenvalue counting function of another compact Lie group is known, a similar proof could, in theory, be applied. Also, notice that due to  Weyl's formula for the eigenvalue counting function, we have that
    \begin{equation*}
        C(E)\sim N(E)\defeq KE^{\frac{n}{2}},
    \end{equation*} 
for some $K>0$. Then, the analogue of equation \eqref{eqgradientfinal} in this case, and the fact that $N(E)$ is increasing, imply
    \begin{align*}\label{eqgrad}
        \sum_{j=1}^N\|\nabla\psi_j\|^2_{L^2}\geq  I(\rho)\sim\int_0^{N^{-1}(\rho)}\left(\rho^{1/2}-K^{\frac{1}{2}}E^{\frac{n}{4}}\right)^2 dE.
    \end{align*}
After the change of variables $F=N(E)$, this yields
    \begin{align*}
        I(\rho)&\sim \left(\frac{2}{n}\right)K^{-\frac{2}{n}}\int_0^\rho\left(\rho^{\frac{1}{2}}-F^{\frac{1}{2}}\right)^2F^{\frac{2}{n}-1}dF\\
        &=\left(\frac{2}{n}\right)K^{-\frac{2}{n}}\left(\rho\int_0^\rho F^{\frac{2}{n}-1}dF-2\rho^{\frac{1}{2}}\int_0^\rho F^{\frac{2}{n}-\frac{1}{2}}dF+\int_0^\rho F^{\frac{2}{n}}dF\right)\\
        &=\left(\frac{2}{n}\right)K^{-\frac{2}{n}}\left(\frac{n}{2}\rho^{\frac{2}{n}+1}-\frac{2}{\frac{2}{n}+\frac{1}{2}}\rho^{\frac{2}{n}+1}+\frac{1}{\frac{2}{n}+1}\rho^{\frac{2}{n}+1}\right)\\
        &=\left(\frac{2}{n}\right)K^{-\frac{2}{n}}\left(\frac{n}{2}-\frac{4n}{4+n}+\frac{n}{2+n}\right)\rho^{\frac{n+2}{n}}\\
        &=\left(\frac{2}{n}\right)K^{-\frac{2}{n}}\frac{n^3}{2(n+2)(n+4)}\rho^{\frac{n+2}{n}}.
    \end{align*}
    And so, integration over $G$ implies that
    \begin{equation*}
       \int_G\rho(x)^{\frac{n+2}{n}}dx\lesssim \sum_{j=1}^N\|\nabla \psi_j\|^2_{L^2(G)}.
   \end{equation*}
   This shows that the exponent $\frac{n+2}{n}$ is the smallest positive exponent for $\rho$ for which this type of inequality is true. Notice also that in the case of $SO(3)$, where $\dim G=3$, this indeed coincides with the exponent $\frac{5}{3}$. 
   Finally, to compute the constant involved in the inequality above more explicitly, let $\tau_\rho$ denote the smallest non-zero eigenvalue of the Laplacian such that $C(\tau_\rho)\geq \rho$, for every $\rho\geq \lambda_1$, and $\tau_\rho=0$ otherwise. From the analogue of equation \eqref{eqgradientfinal} in this case, we obtain that
\begin{align*}
    \sum_{j=1}^N\|\nabla\psi_j\|^2_{L^2}&\geq \int_G\int_0^{\infty}\left(\rho(x)^{\frac{1}{2}}-C(E)^{\frac{1}{2}}\right)^2_+ dEdx \\
        &=\int_G\int_0^{\tau_{\rho(x)}}\left(\rho(x)^{\frac{1}{2}}-C(E)^{\frac{1}{2}}\right)^2 \frac{\rho(x)^{\frac{n+2}{n}}}{\rho(x)^{\frac{n+2}{n}}}dEdx \\
        &\geq \inf_{\rho>0}\left(\int_0^{\tau_\rho}\frac{\left(\rho^{\frac{1}{2}}-C(E)^{\frac{1}{2}}\right)^2}{\rho^{\frac{n+2}{n}}}dE\right)\int_G\rho(x)^{\frac{n+2}{n}}dx \\
        &\geq\inf_{\rho>0 }\left(\int_0^{\tau_\rho}\left(\rho^{-\frac{2}{n}} -2\rho^{-\frac{n+4}{2n}}C(E)^{\frac{1}{2}}+\rho^{-\frac{n+2}{n}}C(E)\right)dE\right)\int_G\rho(x)^{\frac{n+2}{n}}dx \\
        &= \inf_{\rho>0 }\left(
        \tau_\rho\cdot\rho^{-\frac{2}{n}}-
        \int_0^{\tau_\rho}\left(2\rho^{-\frac{n+4}{2n}}C(E)^{\frac{1}{2}}-\rho^{-\frac{n+2}{n}}C(E)\right)dE\right)\int_G\rho(x)^{\frac{n+2}{n}}dx.
    \end{align*}
The problem of finding the constant then reduces to evaluating the infimum. We have done this for the case of  $SO(3)$, but this can be in principle also done in a similar way for the situations where we have some explicit formula for the eigenvalue counting function $C(E)$, for example in the cases of $SU(3)$, $SO(4)$, etc.



\begin{thebibliography}{99}
\bibitem{frank} Frank, R. L., Hundertmark, D., Jex, M., Nam, P. T., {The Lieb–Thirring inequality revisited} {\it J. Eur. Math. Soc.}, 23, 2583–2600, (2021).


\bibitem{survey} Frank, R. L., {The Lieb-Thirring inequalities: Recent results and open problems}, {\it preprint available at arXiv:2007.09326}, (2020).


\bibitem{Frank2} Frank, R., Laptev A., Weidl T., {Schrödinger Operators: Eigenvalues and Lieb–Thirring Inequalities}, {\it Cambridge University Press}, (2022).



\bibitem{application3} Ghidaglia, J., Marion, M., Temam, R., {Generalization of the {Sobolev}-{Lieb}-{Thirring} inequalities and applications to the dimension of attractors}, {\it Differ. Integral Equ.}, 1, 1-21, (1988).

\bibitem{nsphere} Ilyin, A., {Lieb-Thirring Inequalities on the N-Sphere and in the Plane, and Some Applications}, {\it Proceedings of the London Mathematical Society}, s3-67, 159-182, (1993).

\bibitem{application2} Ilyin, A., {Lieb-Thirring integral inequalities and their applications to attractors of Navier-Stokes equations}, {\it Mat. Sb}, 196, no.1, 33–66, (2005); translation in
{\it Sb. Math}, 196, no.1-2, 29–61, (2005).

\bibitem{liebsome} Ilyin, A., {Lieb-Thirring inequalities on some manifolds}, {\it J. Spectr. Theory}, 2, 57–78, (2012).

\bibitem{hyperbolic} Ilyin, A., Laptev, A., Weinmann, T., {Lieb--Thirring inequalities on manifolds with constant negative curvature}, {\it J. Geom. Anal.} 34, 62, (2024). 

\bibitem{torus} Ilyin, A., Laptev, A., {Lieb-Thirring inequalities on the torus}, {\it Mat. Sb.} 207:10, 56–79, (2016).



\bibitem{LiebonS2} Ilyin, A., Laptev, A., {Lieb-{T}hirring inequalities on the sphere}, {\it Algebra i Analiz}, 31, 116-135, (2019).


\bibitem{liebS2best} Ilyin, A., Laptev, A., Zelik, S., {Lieb-Thirring constant on the sphere and on the torus}, {\it Journal of Functional Analysis}, 279, 12,
(2020).





\bibitem{onaclass} Kashin, B., S., {On a class of inequalities for orthonormal systems}, {\it Mat. Zametki}, 80, 2, 204–208, (2006); translation in
{\it Math. Notes}, 80, 1-2, 199–203, (2006).



\bibitem{application1} Lieb, E. H., {On characteristic exponents in turbulence},  {\it Commun.Math. Phys.} 92, 473–480 (1984).


\bibitem{LiebThirring} Lieb, E. H.,  Thirring, W. E., {Inequalities for the moments of the eigenvalues of the {Schr{\"o}dinger} {Hamiltonian} and their relation to {Sobolev} inequalities}, {\it Stud. math. {Phys}., {Essays} {Honor} {Valentine} {Bargmann}}, 269-303, (1976).


\bibitem{sharpvalue} Laptev, A., Weidl, T., {Sharp Lieb-Thirring inequalities in high dimensions}, {\it Acta Math.}, 184 (1), 87-111, (2000).



\bibitem{lieb4best} Pan, S., {\it Lieb-Thirring inequality on the four-dimensional sphere and torus}, {\it preprint available at arXiv:2009.00527}, (2022).

\bibitem{Rumin} Rumin, M., {Balanced distribution-energy inequalities and related entropy bounds}, {\it Duke Math. J.}, 160, 3, 567-597 (2011).


\bibitem{RuzPseudo2010}  Ruzhansky, M.,  Turunen, V. {Pseudo-differential Operators and Symmetries: Background Analysis and Advanced Topics}, Birkh\"auser-Verlag, Basel, (2010).


\bibitem{harmonics} Stein, E., Weiss, G., {Introduction to Fourier analysis on Euclidean spaces}, Princeton
University Press, (1972).

















\end{thebibliography}
\end{document}